\numberwithin{equation}{section}
\newcommand\N{\mathbb{N}}
\newcommand\sO{\mathcal{O}}
\newcommand\fP{\mathfrak{P}}
\newcommand\cS{\mathcal{S}}
\newcommand{\cC}{\mathcal{C}}
\newcommand\fp{\mathfrak{p}}
\newcommand{\Q}{\mathbb{Q}}
\newcommand{\I}{\mathbb{I}}
\newcommand{\Z}{\mathbb{Z}}
\newcommand\Gal{{\mathrm {Gal}}}
\DeclareMathOperator{\lcm}{lcm}
\DeclareMathOperator{\rank}{rank}
\newcommand\remove[1]{}
\newtheorem{theorem}{Theorem}[section]
\newtheorem{lemma}[theorem]{Lemma}
\theoremstyle{definition}
\newtheorem{definition}[theorem]{Definition}
\theoremstyle{remark}
\newtheorem{remark}[theorem]{Remark}
\author{ Srilakshmi Krishnamoorthy}
\author{ Sunil Kumar Pasupulati }
\address{Srilakshmi Krishnamoorthy
	\newline
	\ \ INDIAN INSTITUTE OF SCIENCE EDUCATION AND RESEARCH, TRIVANDRUM, INDIA.}
\email{srilakshmi@iisertvm.ac.in}
\address{ Sunil Kumar Pasupulati
	\newline
	\ \ INDIAN INSTITUTE OF SCIENCE EDUCATION AND RESEARCH, TRIVANDRUM, INDIA.}
\email{sunil4960016@iisertvm.ac.in}
\title[	Euclidean Ideal Class in Biquadratic Fields]{
Non-principal Euclidean ideal class in a family of biquadratic fields with  the class number two}
\begin{document}
	
	\begin{abstract}
		Lenstra introduced the notion of a Euclidean ideal class, which is a generalization of the Euclidean domain. Lenstra also proved that the Euclidean ideal in a number field $K$ implies that the class group of $K$ is cyclic. We construct a family of biquadratic fields that have a  Euclidean ideal whenever the class number is 2. This extends the families given by Graves, Hsu,  Chattopadhyay and  Muthukrishnan.  
	\end{abstract}
	\subjclass[2010]{Primary:11A05, Secondary 11R29.}
	
	\keywords{Euclidean algorithm for number fields,Artin symbol
		Euclidean ideal class, Ideal class group, Hilbert class field.}
	\maketitle
	\section{Introduction}
	For a number field $K$, let    $\sO_K,Cl_{K},h_K, \sO_K^{\times},H(K)$, and $f(K)$  denote the  ring of integers of $K$, the ideal class group of  $K$,   
	the class number of $K$,  the multiplicative group of units  of $K$,  the Hilbert class field of $K$, and the  conductor of $K$, respectively.  If $L/K$ is an  extension of number fields and a prime $\fP$ of $L$  lies above a prime $\fp$ of $K$, then   the   residue degree $f(\fP/\fp)$   is equal to  the  dimension of $\sO_L/\fP $ as $\sO_K/\fp$ vector space.  
	
	In 1979, Lenstra \cite{len} introduced  Euclidean ideals, which are  generalization of Euclidean domains.
\begin{definition}
	Let $R$ be a Dedekind domain and $\I$ be the set of all non-zero integral ideals of $R$. The ideal  $C \in \I$   is called a  {\it Euclidean ideal} if there exists a function $\Psi:\I \to W$, where $W$ is  a well-ordered set, such that for all $I \in \I $ and   every $x\in I^{-1}C \setminus C$,  there exists  a  $y\in C$ such that $$ \Psi\left( (x-y)IC^{-1} \right) < \Psi (I).$$
	We say  $\Psi$ is  a Euclidean function for $C$.
	If $C$ is a Euclidean ideal, then every ideal in the ideal class $[C]$ is also a Euclidean ideal, and the ideal class $[C]$ is called a {\it Euclidean ideal class.}
\end{definition}
Lenstra \cite{len}  proved that if  $C$ is a Euclidean ideal, then  the ideal class  $[C]$ generates $Cl_K$,  hence   $Cl_K$ is cyclic. Nevertheless the converse need not be true. For example, the  imaginary quadratic fields  $\Q(\sqrt{-d})$,  where  $d= 19, 43, 67, 163,$ have trivial class groups but do not contain a Euclidean ideal.
Lenstra \cite{len} proved that suppose $ K$ is a number field with  $\rank(\sO_K^{\times})\geq 1$,  under the assumption of  the generalized Riemann hypothesis
(GRH),  $Cl_K$ is cyclic if and only if $ K$ has a Euclidean ideal class. 
More precisely, assuming the GRH, a number field, not an imaginary quadratic field, has a cyclic class group if and only if it has a Euclidean ideal class.

		Graves \cite{G1} constructed an explicit biquadratic field $\Q(\sqrt{2},\sqrt{35})$ which has a non-principal Euclidean ideal class by using a growth result (\Cref{G1}). This was generalized by Hsu 
	\cite{CH}, in which the author  explicitly constructed a family of  real biquadratic fields $K= \Q(\sqrt{q},\sqrt{kr})$  that have a non-principal Euclidean ideal class, whenever $h_K =2$ and  $q,k,r$  are all primes congruent to 1 modulo 4 which are greater than $29$.
	Chattopadhyay and Muthukrishnan \citep{JS19}  extended Hsu's family of biquadratic fields by removing the  condition   $q \equiv 1\mod4$.

	In this paper, we provide the following new family of real biquadratic fields $K$ that have a non-principal Euclidean ideal class whenever  $h_K=2$ by assuming only 
	one of $r,k$ is congruent to $1$ modulo $4$.
	\begin{theorem}{\label{main}}
		Let $K=\Q(\sqrt{q},\sqrt{kr})$,   either  $k$ or $r$
		is congruent to $1 $ modulo $4$.  Suppose that  $q,r$, and $k$  are prime numbers, none of  which is $ 2,3,5,7$  or $17$ and $h_K= 2$, then $K$ has a non-principal Euclidean ideal class.
	\end{theorem}
	
	The real biquadratic field $K =\Q(\sqrt{q},
	\sqrt{kr})$  has  unit rank $3$  but  we cannot apply \cite[Theorem 2] {DGS}  and  conclude that K has a Euclidean ideal, as 
	 $\Gal\left(\Q\left(\zeta_{f\left(H(K)\right)}\right)/K\right)$ is not necessarily  cyclic, however we may apply  \Cref{main}. For example,  $K=\Q(\sqrt{13},\sqrt{19 \times37})$, then  some simple SageMath calculations and  \Cref{lm31} imply  that the Hilbert class field of $K$ is   $\Q(\sqrt{13},\sqrt {19},\sqrt{37}) $  and its  conductor is equal to 36556. The Galois group 
		$\Gal(\Q(\zeta_f)/K) \cong \Z/12\Z\times \Z/9\Z\times \Z/36\Z$  is not cyclic. But we can conclude that $K$ has a non-principal Euclidean ideal class by \Cref{main}.

The organization of the paper is as follows. In \Cref{sec2}, we write some useful preliminaries for proving the main result. In \Cref{sec3}, we prove \Cref{main}. In \Cref{sec4},  we provide some examples of real biquadratic fields satisfying the assumptions of  \Cref{main}.
	\section{Preliminaries}\label{sec2}
	\subsection{Definitions and results}
	\begin{definition}
		Let $K/\Q$ be a finite abelian extension. The {\it conductor} of  $K$, which is denoted by $f(K)$ is the smallest even number such that  $K \subset \Q(\zeta_{f(K)})$.
	\end{definition}
	\begin{definition}
		The {\it Hilbert class field } $H(K)$ of  a number field $K$ is  the maximal unramified abelian extension of $K$.
	\end{definition}
	For example,  If $K=\Q(\sqrt{11},\sqrt{43\times 13})$,  then $H(K) = \Q(\sqrt{11},\sqrt{43}, \sqrt{13})$.
	\begin{definition}
Let  $K$ be a number field and $\fp$ be a prime  of $K$ lies over  a rational prime $p$. The prime $\fp$ is called a prime of first degree if $\sO_K/\fp\cong\Z/p\Z$.
	\end{definition}
	\begin{lemma}\label{L1}
	Let $L_1,L_2$ be finite extensions of number field $K$. Suppose the  prime $p$ in $K$ is unramified in $L_1$. If $\fP_2$  is  a prime of $L_2$  lying above $p$,  then  $\fP_2$ is unramified in $L_1L_2 $ over $L_2$.
\end{lemma}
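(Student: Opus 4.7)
The strategy is to reduce the global claim to the local Lemma \ref{lang} by passing to completions. First I would fix an arbitrary prime $\fQ$ of $L_1 L_2$ lying above $\fP_2$, and set $\fP_1 := \fQ \cap L_1$, so that $\fP_1$ lies above $p$ in $L_1$. Showing that $\fP_2$ is unramified in $L_1 L_2$ amounts to showing $e(\fQ/\fP_2) = 1$ for every such $\fQ$, and ramification indices are preserved under completion, so I may test this on the completed extension $(L_1 L_2)_{\fQ}/(L_2)_{\fP_2}$.

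Next I would pass to the completions $K_p$, $(L_1)_{\fP_1}$, $(L_2)_{\fP_2}$ and $(L_1 L_2)_{\fQ}$, viewed inside a fixed algebraic closure of $K_p$. Using the standard identification of the completion of a compositum of number fields with the compositum of the completions, one has
\[
(L_1 L_2)_{\fQ} \;=\; (L_1)_{\fP_1} \cdot (L_2)_{\fP_2}.
\]
Since $p$ is unramified in $L_1$ by hypothesis, the local extension $(L_1)_{\fP_1}/K_p$ is unramified. I would then apply Lemma \ref{lang} to the complete discretely valued field $K_p$, its unramified extension $(L_1)_{\fP_1}$, and the finite extension $(L_2)_{\fP_2}/K_p$, to deduce that $(L_1)_{\fP_1} \cdot (L_2)_{\fP_2}$ is unramified over $(L_2)_{\fP_2}$. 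Translating this back gives $e(\fQ/\fP_2) = 1$.

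Since $\fQ$ was an arbitrary prime of $L_1 L_2$ lying above $\fP_2$, this shows $\fP_2$ is unramified in $L_1 L_2$, completing the argument. I do not anticipate any real obstacle; the only technical point worth spelling out is the identification of $(L_1 L_2)_{\fQ}$ with $(L_1)_{\fP_1} \cdot (L_2)_{\fP_2}$, which is a routine consequence of the universal property of completions together with the finiteness of the extensions involved.
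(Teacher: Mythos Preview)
Your proposal is correct and follows essentially the same route as the paper: pass to completions, use that ramification indices agree with their local counterparts, identify $(L_1L_2)_{\fQ}$ with the compositum $(L_1)_{\fP_1}\cdot(L_2)_{\fP_2}$, and then invoke Lemma~\ref{lang}. If anything, your write-up is cleaner in that it makes explicit the quantification over all primes $\fQ$ above $\fP_2$ and places the appeal to Lemma~\ref{lang} in the correct logical position.
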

\begin{proof}
	The proof is evident from   \cite[Lemma 2.2]{KNS} 		
\end{proof}

	We now state two theorems which provide the upper bounds for the least prime quadratic residue  and non-residue that are congruent to $3$  modulo $4$.
	\begin{theorem}\label{Q1}(Gica \cite{AG06})
		If $p$ is a prime not equal to $2,3,5,7,$ and $17,$ then there exists a prime $q<p$ and $q\equiv 3 \pmod{4}$  which is a quadratic residue modulo $p$.
	\end{theorem}
	\begin{theorem}\label{Q2}(Pollack \cite{PP18} )
		For every prime $p \geq 5$, there is a prime $q<p$ and $q\equiv 3\pmod{4}$  which is a quadratic non residue  modulo $p$. 
	\end{theorem}

\subsection{Few results on Euclidean ideals }
	Graves \cite{growth}  proved a useful growth result,  which gives a condition for the existence of a Euclidean ideal in  number fields, without the assumption of the  GRH. We state the result below, which is one of the main ingredients for proving our theorem.
	\begin{theorem}{\label{G1}}(Graves \cite{growth})
		Suppose that K is a number field such that $\rank(\sO_K^{\times})\geq 1$
		and that $C$ is a non-zero ideal of $\sO_K$. If $[C]$  generates the class group of $K$ and
		\[\Biggr| \{  \fP \subset \sO_K:   \fP  \text{\  is \ prime }, \ \N(\fP)\leq x, \ [\fP]=[C], \ \pi_\fP \text{\ is \ onto} \}\Biggr|\gg \frac{x}{(\log x)^2},
		\]
		where $\pi_\fP$ is the canonical map from $\sO_K^{\times} \to (\sO_K/\fP)^{\times}$,  then $[C]$ is a Euclidean ideal class.
	\end{theorem}
	   Using the above growth result,  Graves and Murty  \cite{GM}  proved the  existence of a  Euclidean ideal class in Galois number field $K$   with abelian  Hilbert class field $H(K)$ and $\rank(\sO_K^{\times})\geq 4$.
	 
\begin{theorem} (Graves \cite{G1})\label{G2}
	Let K be a totally real number field with conductor $f(K)$ and let
	$\{ e_1,e_2,e_3\}$ be a multiplicatively independent set contained
	in $\sO_K^{\times}$. If $l= \lcm (16,f(K)),$ and if
	$\gcd (u,l)=\gcd(\frac{u-1}{2},l)=1$ for some integer u, then
	
	\begin{align*}
		\left|  \begin{Bmatrix}
			\fP \subset \sO_K,\quad \fP \ 	\text{is a prime  }    &\Bigm\vert &N(\fP)\equiv u \pmod{l}, \  N(\fP)\le x,   \\
			\text{ of  first degree  }  &\Bigm\vert & \langle-1,e_i\rangle\twoheadrightarrow (\sO_K/\fP)^{\times}
		\end{Bmatrix}
		\right|\gg \frac{x}{(\log x)^2},
	\end{align*}
	for at least one $i$, where $ \langle-1,e_i\rangle$ is multiplicative group generated by  $-1$  and  $e_i$.
\end{theorem}

\subsection{The Artin symbol and its applications}
	Let us state a lemma, which is essential to define the Artin symbol.
	\begin{lemma} \label{al}
		Let $K\subset L$ be a Galois extension of number fields and  let $\fp$ be a prime
		in  $\sO_K$ which is unramified in L. If  $\fP$ is prime of $\sO_L$  containing
		$\fp$, then there exists  a unique element $\sigma \in Gal( L/K)$ such that for
		all $\alpha \in \sO_L$, 
		 \begin{align*}
		\sigma(\alpha)\equiv\alpha^{Nm(\fp)} \pmod {\fP},
		\end{align*}
		where $Nm(\fp) =|\frac{\sO_K}{\fp}|$ is the norm of $\fp$.
	\end{lemma}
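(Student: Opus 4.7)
The plan is to establish this as the standard existence and uniqueness of the Frobenius element at an unramified prime, using the structure of the decomposition and inertia subgroups. First I would introduce the decomposition group
\[
D(\fP/P) = \{\sigma \in \Gal(L/K) : \sigma(\fP)=\fP\},
\]
and observe that every $\sigma\in D(\fP/P)$ induces a well-defined automorphism of $k_\fP := \sO_L/\fP$ that fixes $k_P := \sO_K/P$ pointwise, giving a natural homomorphism $\rho\colon D(\fP/P) \to \Gal(k_\fP/k_P)$.

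Next I would invoke the foundational theorem (e.g.\ Neukirch, \emph{Algebraic Number Theory}, Ch.\ I §9) that $\rho$ is surjective and that its kernel is the inertia group $I(\fP/P)$. Because $P$ is unramified in $L$, the inertia group is trivial, so $\rho$ is an isomorphism. The residue field extension $k_\fP/k_P$ is an extension of finite fields, hence Galois with cyclic Galois group generated by the standard Frobenius $\phi\colon x\mapsto x^{N(P)}$.

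For existence, I would define $\sigma\in D(\fP/P)\subseteq \Gal(L/K)$ to be the unique preimage $\rho^{-1}(\phi)$. By construction, for every $\alpha\in\sO_L$ the residue of $\sigma(\alpha)$ in $k_\fP$ equals the $N(P)$-th power of the residue of $\alpha$, which is the required congruence $\sigma(\alpha)\equiv \alpha^{N(P)}\pmod{\fP}$.

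For uniqueness, suppose $\tau\in \Gal(L/K)$ also satisfies $\tau(\alpha)\equiv \alpha^{N(P)}\pmod{\fP}$ for all $\alpha\in\sO_L$. Taking $\alpha\in\fP$ yields $\tau(\alpha)\in\fP$, so $\tau(\fP)\subseteq \fP$, and since $\tau(\fP)$ is a maximal ideal of $\sO_L$ of the same norm as $\fP$, we conclude $\tau(\fP)=\fP$, i.e.\ $\tau\in D(\fP/P)$. But then $\rho(\tau)$ and $\rho(\sigma)$ both coincide with $\phi$, and the injectivity of $\rho$ (again using that $P$ is unramified) forces $\tau=\sigma$. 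The only real content here is the invocation of the decomposition/inertia structure theorem; once that is in hand the argument is routine, so the ``main obstacle'' is simply to cite that classical result cleanly rather than to reprove it.
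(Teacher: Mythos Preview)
Your argument is correct and is exactly the standard proof via the decomposition group and the isomorphism $D(\fP/P)\cong \Gal(k_\fP/k_P)$ in the unramified case. Note, however, that the paper does not actually prove this lemma: it is introduced with ``We recall a result from class field theory'' and left unproved, serving only to set up the definition of the Artin symbol. So there is no paper-proof to compare against; you have supplied a (valid) proof where the paper simply cites the result.
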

	\begin{proof}
		Refer to 	\cite[Lemma 5.19]{cox}.
	\end{proof}
	\begin{definition}
		The unique element $\sigma$ in  the above lemma is called  the  {\it Artin symbol} and it is denoted by $\left( \frac{L/K}{\fP}\right)$.
	\end{definition}
	In the next  lemma, we state some useful properties of  the Artin symbol which will be used in later part of  this paper.
	\begin{lemma}
		Let $K\subset L$ be a Galois extension and  the prime  $\fp\subset \sO_K$ be unramified in $L$.  Given a prime $\fP$ of $L$ containing $\fp$, 
		\begin{enumerate}
			\item If $\sigma \in Gal(L/K)$, then  $$ \left(\frac{L/K}{\sigma(\fP)}\right)=\sigma\left(\frac{L/K}{\fP}\right)\sigma^{-1}.$$
			\item  The prime $\fp$ splits completely  in $L$ if and only if $\left(\frac{L/K}{\fP}\right)=1$.
		\end{enumerate}
	\end{lemma}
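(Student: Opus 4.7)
The overall plan is to deduce both parts directly from the uniqueness clause of Lemma \ref{al}, which characterizes the Artin symbol $\left(\frac{L/K}{\fP}\right)$ as the \emph{unique} element of $\Gal(L/K)$ satisfying the Frobenius congruence modulo $\fP$. Both statements will then follow by identifying a natural candidate and checking that it satisfies this defining property.

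For part (1), I would set $\tau = \left(\frac{L/K}{\fP}\right)$ and show that the conjugate $\sigma\tau\sigma^{-1}$ obeys the Frobenius congruence modulo $\sigma(\fP)$. Taking an arbitrary $\alpha \in \sO_L$, I would write $\alpha = \sigma(\beta)$ with $\beta = \sigma^{-1}(\alpha) \in \sO_L$, apply the known congruence $\tau(\beta) \equiv \beta^{N(P)} \pmod{\fP}$, and transport it through the ring automorphism $\sigma$, which carries $\fP$ bijectively onto $\sigma(\fP)$. The outcome is exactly $\sigma\tau\sigma^{-1}(\alpha) \equiv \alpha^{N(P)} \pmod{\sigma(\fP)}$, and since this holds for every $\alpha$, the uniqueness clause of Lemma \ref{al} forces $\sigma\tau\sigma^{-1} = \left(\frac{L/K}{\sigma(\fP)}\right)$.

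For part (2), I would introduce the decomposition group $D(\fP/P) = \{\sigma \in \Gal(L/K) : \sigma(\fP) = \fP\}$, of order $e(\fP/P)f(\fP/P)$. Since $P$ is unramified, $e(\fP/P) = 1$, so $|D(\fP/P)| = f(\fP/P)$ and the reduction map $D(\fP/P) \to \Gal((\sO_L/\fP)/(\sO_K/P))$ is an isomorphism onto a cyclic group generated by the arithmetic Frobenius $x \mapsto x^{N(P)}$. The Artin symbol, being the unique lift of that Frobenius, therefore generates $D(\fP/P)$. Hence $\left(\frac{L/K}{\fP}\right) = 1$ is equivalent to $D(\fP/P) = \{1\}$, i.e.\ $f(\fP/P) = 1$, which combined with $e(\fP/P) = 1$ says precisely that $P$ splits completely in $L$.

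The only point that really requires care is the identification of the Artin symbol with a generator of $D(\fP/P)$ in part (2); this leans essentially on the unramifiedness of $P$ (vanishing inertia), which makes the reduction map an isomorphism and turns uniqueness in the residue field Galois group into uniqueness inside $D(\fP/P)$. Once this identification is in hand, neither part involves computation: part (1) is a one-line verification followed by an application of uniqueness, and part (2) is a translation between $D(\fP/P)$, $f(\fP/P)$, and the splitting behaviour of $P$.
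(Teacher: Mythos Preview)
Your argument is correct and follows the standard textbook route: part~(1) is obtained by transporting the defining Frobenius congruence through the automorphism~$\sigma$ and invoking the uniqueness clause of Lemma~\ref{al}, and part~(2) by identifying the Artin symbol with a generator of the decomposition group $D(\fP/P)$, which has order $f(\fP/P)$ in the unramified case.

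The paper itself, however, does not supply a proof of this lemma at all; it is simply stated as a well-known property of the Artin symbol (in the spirit of the surrounding preliminaries, which recall standard facts from class field theory without proof, citing references such as \cite{cox}). So there is no ``paper's own proof'' to compare against: your write-up is a correct and self-contained justification of a result the paper takes for granted.
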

	\begin{proof}
		Refer to 	\cite[Lemma 5.21]{cox}.
	\end{proof}
	\begin{remark}
		Let  $L/K$ be  an abelian extension and the prime $\fp$  of $K$ is unramified in $L$. For any two primes $\fP_1,\fP_2 $ of $L$ which lie above a prime $\fp$, we have  $\left(\frac{L/K}{\fP_1}\right)=\left(\frac{L/K}{\fP_2}\right)$. Therefore for any prime $\fP$ in $L$, which lies above $\fp$,  the Artin symbol is also denoted by $\left(\frac{L/K}{\fp}\right)$.	
	\end{remark}
	\begin{definition}
		Let $K$ be a number field and  let $\mathbb{P}_K$ be  the set of all prime ideals of $\sO_K$. If  $\cS$ is a subset of $\mathbb{P}_K$,  then  the  {\it Dirichlet density} of the set $\cS$  is defined to be 
		$$ \delta(\cS)= \text{lim}_{s\to 1^+}\frac{\sum_{\fp\in\cS }Nm(\fp)^{-s}}{-\log(s-1)},$$
		provided  the limit exists, where $Nm(\fp)=|\sO_K/\fp|$.
	\end{definition}
	Now we recall the Chebotarev density theorem   \cite[Theorem 8.17]{cox}, which helps to calculate the Dirichlet density of a set.
	\begin{theorem}\label{density}
		Let $L/K$ be a finite Galois extension of global fields and  let $\cC$ be a conjugacy class in $\Gal(L/K)$. The Dirichlet density of the set  $$ \Big\{ \fp\in \mathbb{P}_K \Bigm\vert\  \fp \ \text{ is unramified in $L $ and }  \left( \frac{L/K}{\fp} \right)= \cC \Big\}$$  exists and  it is equal to $\frac{|\cC| }{[L:K]}$.
	\end{theorem}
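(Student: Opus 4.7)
The plan is to establish Chebotarev's theorem by first proving the abelian case through class field theory and the analytic theory of Hecke $L$-functions, and then reducing the general Galois case to the abelian one via Chebotarev's own trick of passing to the fixed field of a cyclic subgroup generated by a Frobenius lift.

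I would first handle the abelian case. Assume $G=\Gal(L/K)$ is abelian, so every conjugacy class reduces to a single element $\{\sigma\}$. By class field theory, $L$ sits inside a ray class field $K_\mathfrak{m}$ for some admissible modulus $\mathfrak{m}$, and the Artin map identifies $G$ with a quotient of the ray class group, sending each prime $\fp$ coprime to $\mathfrak{m}$ to $\left(\frac{L/K}{\fp}\right)$. For every character $\chi$ of $G$ the Hecke $L$-function satisfies
\[
\log L(s,\chi) \;=\; \sum_{\fp} \chi\!\left(\frac{L/K}{\fp}\right) N\fp^{-s} \;+\; O(1) \qquad (s\to 1^+),
\]
and character orthogonality gives
\[
\sum_{\left(\frac{L/K}{\fp}\right)=\sigma} N\fp^{-s} \;=\; \frac{1}{|G|}\sum_{\chi} \bar\chi(\sigma)\log L(s,\chi) \;+\; O(1).
\]
Only the trivial character contributes a singularity, via the simple pole of $\zeta_K(s)$; for every nontrivial $\chi$ the $L$-function is analytic and nonvanishing at $s=1$. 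Comparing with $\sum_{\fp} N\fp^{-s}\sim \log\tfrac{1}{s-1}$ yields the density $1/|G|=|\cC|/[L:K]$.

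Next I would reduce the general Galois case to the abelian one. Fix $\sigma\in\cC$, set $H=\langle\sigma\rangle$ and $M=L^H$, so that $L/M$ is cyclic with Galois group $H$. Applying the abelian case to $L/M$, the set
\[
T \;=\; \left\{\fP\in P(M): \fP \text{ unramified in } L,\ \left(\frac{L/M}{\fP}\right)=\sigma\right\}
\]
has Dirichlet density $1/|H|$ over $M$. A standard orbit-stabilizer argument then provides the key bridge: for any prime $\fp$ of $K$ unramified in $L$ with $\left(\frac{L/K}{\fp}\right)=\cC$, using that $G$ acts transitively on primes of $L$ above $\fp$ with cyclic decomposition groups, that primes of $M$ above $\fp$ correspond to $H$-orbits on these primes, and that a prime $\fQ$ of $L$ with $\mathrm{Frob}(\fQ|\fp)=\sigma$ forces $D(\fQ|\fp)=H$ and hence $f(\fP|\fp)=1$, one shows the number of $\fP\in T$ lying above $\fp$ equals $|Z_G(\sigma)|/|H|$, where $Z_G(\sigma)$ denotes the centralizer of $\sigma$ in $G$.

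Finally, I would combine the two ingredients. Primes $\fP$ with $f(\fP|\fp)\ge 2$ contribute only $O(1)$ to $\sum N\fP^{-s}$ as $s\to 1^+$ and may be discarded; for the remaining $\fP$ one has $N\fP=N\fp$. The counting bridge therefore yields
\[
\sum_{\fP\in T} N\fP^{-s} \;=\; \frac{|Z_G(\sigma)|}{|H|}\sum_{\fp\in S_\cC} N\fp^{-s} \;+\; O(1),
\]
where $S_\cC$ denotes the set in the theorem statement. Since the left side is $\sim \tfrac{1}{|H|}\log\tfrac{1}{s-1}$ and $|\cC|\cdot|Z_G(\sigma)|=|G|$, this produces $\delta_K(S_\cC)=|\cC|/[L:K]$, as required. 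The main obstacle is the abelian case, and more precisely its analytic heart: the nonvanishing $L(1,\chi)\neq 0$ for nontrivial Hecke characters $\chi$, which is the classical Hecke--Landau input. The Chebotarev reduction and the orbit-stabilizer counting are purely formal group-theoretic bookkeeping.
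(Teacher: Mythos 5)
This statement is the Chebotarev density theorem; the paper does not prove it but quotes it verbatim from Cox (Theorem 6.3.1 of \cite{cox}), so there is no internal argument to compare yours against. Your sketch is the standard class-field-theoretic proof and is essentially correct: the abelian case via embedding $L$ in a ray class field, the identity $\log L(s,\chi)=\sum_{\fp}\chi\bigl(\tfrac{L/K}{\fp}\bigr)N\fp^{-s}+O(1)$, orthogonality of characters, and the Hecke--Landau nonvanishing $L(1,\chi)\neq 0$; then the reduction of the general case to the cyclic extension $L/L^{\langle\sigma\rangle}$ with the counting factor $|Z_G(\sigma)|/|H|$. The counting step is right for the reason you indicate: a prime $\fQ$ of $L$ with $\mathrm{Frob}(\fQ|\fp)=\sigma$ has decomposition group exactly $H$ over $M=L^H$, so it is the unique prime of $L$ above $\fP=\fQ\cap M$, making $\fQ\mapsto\fP$ injective and giving exactly $|Z_G(\sigma)|/|H|$ primes of $T$ of residue degree one over $\fp$; primes of $M$ with residue degree at least $2$ over $K$ contribute $O(1)$ and are correctly discarded. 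Two small caveats: the theorem as stated is for global fields, and your analytic input is phrased for number fields (the function-field case needs the corresponding $L$-functions of ray class characters, where nonvanishing at $s=1$ again holds); and the whole weight of the argument rests on $L(1,\chi)\neq 0$, which you rightly flag as the imported classical ingredient rather than something you establish. For the purposes of this paper, where the theorem is only used to compute the densities $1/4$ and $1/8$ in Lemma \ref{inf} for the abelian extensions $K/\Q$ and $H(K)/\Q$, your abelian case alone already suffices.
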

	
	We end this section by stating a lemma  whose proof is evident from \cite[Corollary 5.21, Corollary 5.25]{cox}.

	\begin{lemma}\label{princi}
		Let $K $ be a number field. Suppose  $\fP$ and $\fp$ are primes of $H(K) $  and $K$ respectively,  which lies above  the  rational prime p. If $\fP/p$ has residue degree greater than or equal to $2$ and $\fp/p$ has residue degree $1$,  then  $\fp$  is a non-principal ideal.
	\end{lemma}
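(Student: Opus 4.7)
The plan is to read the lemma in its intended setting, namely with $L = H(K)$ the Hilbert class field of $K$. This reading is forced by the cited Corollaries 5.21 and 5.25 of Cox, and is necessary for the statement to be correct: for a generic extension $L/K$ the conclusion simply fails (for instance $K=\Q$, $L=\Q(\sqrt{2})$, $p=5$ gives $f(\fP/p)=2$, $f(\fp/p)=1$, yet $\fp=(5)$ is principal). With $L=H(K)$ the statement reduces to a one‑line consequence of Artin reciprocity.

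The first step is multiplicativity of residue degrees in the tower $p \subset \fp \subset \fP$,
\[
f(\fP/p) \;=\; f(\fP/\fp)\,f(\fp/p).
\]
Plugging in the hypotheses $f(\fP/p)=2$ and $f(\fp/p)=1$ forces $f(\fP/\fp)=2$. In particular $\fp$ does not split completely in $L=H(K)$, since complete splitting would demand $f(\fP'/\fp)=1$ for every prime $\fP'$ of $L$ above $\fp$.

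The second step is to invoke the class‑field‑theoretic dictionary recorded in Cox's Corollaries 5.21 and 5.25: the Artin symbol $\bigl(\tfrac{H(K)/K}{\fp}\bigr)$ has order equal to the common residue degree $f(\fP/\fp)$, and under the Artin isomorphism $\Gal(H(K)/K)\cong Cl_K$ recalled in Lemma \ref{artin} it corresponds to the ideal class $[\fp]\in Cl_K$. Since $f(\fP/\fp)=2 \ne 1$, this Artin symbol is non‑trivial, whence $[\fp]\ne 1$ in $Cl_K$, i.e.\ $\fp$ is non‑principal.

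The only real obstacle is spotting that $L$ must be taken to be $H(K)$; once that interpretation is in place, the argument is essentially just multiplicativity of residue degrees combined with the fact that a prime of $K$ splits completely in its Hilbert class field if and only if it is principal. No delicate computation is required, and the lemma follows directly.
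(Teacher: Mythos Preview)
Your reading is correct and matches the paper's intent. The paper does not actually supply a proof of this lemma: it states the result ``without proof'' and merely points to Corollaries~5.21 and~5.25 of Cox, which are precisely the Artin isomorphism $\Gal(H(K)/K)\cong Cl_K$ and the criterion that a prime of $K$ splits completely in $H(K)$ if and only if it is principal. Your argument---multiplicativity of residue degrees to get $f(\fP/\fp)=2$, hence $\fp$ does not split completely in $H(K)$, hence $\fp$ is non-principal---is exactly the derivation those citations encode, and your observation that the lemma is false for arbitrary $L$ and only makes sense with $L=H(K)$ is a genuine clarification the paper leaves implicit.
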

	\section{Proof of the Main Theorem}\label{sec3}
	We start this section by explicitly calculating the Hilbert class field of $\Q(\sqrt{q},\sqrt{kr}),$ which plays a crucial role in the proof of our main theorem. 
	\begin{lemma}\label{lm31}
		If $K=\Q(\sqrt{q},\sqrt{kr})$ and at least one of    $k,r$
		is congruent to $1 $ modulo $4$, then  $H(K)=\Q(\sqrt{q},\sqrt{k},\sqrt{r})$.
		
	\end{lemma}
	\begin{proof}
		Suppose $L=\Q(\sqrt{q},\sqrt{k},\sqrt{r})$, then the conductor    $f(L)= 4qkr$. Therefore  only primes that can ramify in $L/K$  are those primes that lie above $ 2,q,k, $ and $r$. 
		We have  at least one of $k,r$ is congruent to $1$ modulo ${4}$,  so without loss of generality, we assume that  $r\equiv 1 \pmod 4$.
		Since  $r\equiv 1 \pmod 4$, the prime  $2$ does not  ramify in $\Q(\sqrt{r})$.  Thus  by \Cref{L1},   the primes which lie above $2$ in  $K$ are unramified in $L$. 
		If  $p$ is one of $k,q,r,$ then  $p$ is unramified in  at least
		one of the  quadratic fields
		$\Q(\sqrt{r})$, $\Q(\sqrt{k})$. By  \Cref{L1},  the
		primes which lie above $p$ in  $K$ are unramified in $L$. Therefore all the primes of $K$  are unramified in $L$. As a consequence of the fact that the  Hilbert class field $H(K)$ of $K$ is the maximal abelian unramified extension of $K$,  thus $L\subset H(K)$.  Moreover, since   $Gal(H(K)/K)\cong Cl_K$, we obtain  $H(K)=L$.
	\end{proof}

	\begin{lemma}\label{inf}
		The primes which  split completely in $K=\Q(\sqrt{q},\sqrt{kr})$  but not in its  Hilbert class field $H(K)=\Q(\sqrt{q},\sqrt{k},\sqrt{r})$ have density $\frac{1}{8}$.
	\end{lemma}
	\begin{proof}
		Let $X_{K},X_{H(K)}$  denote the  sets of prime numbers  which split completely in $K$ and $H(K)$ respectively. Then 
		\begin{align*}
			X_K&=\left \{ p\ \Bigm\vert \ \text{ $p$  is prime and} \ \left(\frac{K/\Q}{p}\right)=1\right\},\\
			X_{H(K)}&= \left \{ p \ \Bigm\vert \ \text{ $p$  is prime and} \ \left(\frac{H(K)/\Q}{p}\right)=1\right\}.
		\end{align*}
		Since $K\subset H(K)$, it follows that $X_{H(K)} \subset X_{K}$.  
		The identity elements  in $\Gal(K/\Q)$, and $\Gal(H(K)/K)$ each form their own  conjugacy class. By  \Cref{density}, the Dirichlet densities of $X_K$ and $X_{H(K)}$ are 
		$\frac{1}{4}$ and $\frac{1}{8}$  respectively. Since $X_{H(K)} \subset X_{K},$  we conclude  that the  Dirichlet
		density of the set $X_ {K} \setminus  X_{H(K)}$ is $\frac{1}{4}-\frac{1}{8}=\frac{1}{8}$,  therefore $X_ {K} \setminus  X_{H(K)}$ is infinite.	
	\end{proof}
\subsection*{Proof of Theorem \ref{main}}  
	Since  $K =\Q(\sqrt{q},\sqrt{kr})$ is  a totally real number field of degree $4$,    $\rank(\sO_K^{\times}) =3$, and  there exists a   multiplicatively independent set    $\{e_1,e_2,e_3\}$     in $\sO_K^{\times}$. Let $l=16qkr$. 
	Next we will  find an integer $ u $  satisfying  the following conditions : 
	\begin{enumerate}
		\item $(u,l)=1,$ \label{c1}
		\item $(\frac{u-1}{2},l)=1,$ \label{c2} 
		\item If a prime  $p$ satisfies $p\equiv u \pmod{4qkr}$, then $f(\fp/p)=1 $  and  $f(\fP/p)=2$. 
		\label{c3} 
	\end{enumerate}
	By \Cref{inf},  there are infinitely many primes satisfying condition \ref{c3}. 
	Observe that a prime $p$ splits completely in $K=\Q(\sqrt{q},\sqrt{kr})$ if and only if $p$ splits completely in  $\Q(\sqrt{q})$ and $\Q(\sqrt{kr})$. This  translates to 
	\begin{align*}
		\left( \frac{q}{p}\right)=1 \ \text{and} \ \left( \frac{kr}{p}\right)=1.
	\end{align*}
	Similarly,  a prime $p$ splits completely in  $H(K) =\Q(\sqrt{q},\sqrt{k},\sqrt{r})$ if and only if
\begin{align*}
	\left( \frac{q}{p}\right)=1, \left( \frac{k}{p}\right)=1, \ \text{and} \ \left( \frac{r}{p}\right)=1.
\end{align*}
	Thus a prime   $p \notin X_{H(K)}$  if and only if  one  among $ \left( \frac{q}{p}\right),  \left( \frac{k}{p}\right), \left( \frac{r}{p}\right) $ is equal to $-1$. 
	Hence
	\begin{align*}{\label{e1}}
		X_K 	\setminus X_{H(K)} = \Biggr \{ p: \ \text{ $p$  is prime and} \ \left(\frac{q}{p}\right)=1 \ \text{and} \ \left( \frac{k}{p}\right)=\left( \frac{r}{p}\right)=-1\Biggr\}.
	\end{align*}
	
We demonstrate how $u$ can be found by using the Chinese remainder theorem. 
 For that, we translate the conditions(1)-(3) into simultaneous congruences.
 
Since  either  $k$ or $r$ is congruent to $1$  modulo $4$, without loss of generality, we can assume that $k\equiv 3\pmod{4},r\equiv 1\pmod{4}$.
	By \Cref{Q1} and \Cref{Q2},  there exist prime numbers $p_1<q,p_2<k,p_3<r$ that  are congruent   to $3$ modulo $4$ and satisfy
	$$ \left( \frac{p_1}{q}\right) =(-1)^{\frac{q-1}{2}}, \left(\frac{p_2}{k}\right)=1 \ \text{and} \ \left(\frac{p_3}{r}\right)=-1. $$
	Now consider the following system of simultaneous congruences.
	\begin{equation}\label{e1}
	x\equiv p_1 \pmod{q}, 
\end{equation}
\begin{equation}\label{e2}
	x\equiv p_2 \pmod{k},
\end{equation}	
\begin{equation}\label{e3}
	x\equiv p_3 \pmod{r},
\end{equation}	
\begin{equation}\label{e4}
	x\equiv 3 \pmod{4}.
\end{equation}
	The Chinese remainder theorem promises a unique solution modulo $4qkr$ to the  above system of congruences, say $x_0$.
	Since $\gcd(x_0,4qkr)=1$,  by  Dirichlet’s theorem on primes in arithmetic progressions, there exist infinitely many prime numbers $p$ satisfying $p\equiv x_0\pmod{4qkr}$. We pick one  such  prime and call it $u$. It is easy to observe that $u$ satisfies  condition (\ref{c1}).  If $\gcd(\frac{u-1}{2},l) \neq 1$, then  either there exists an odd prime $p \mid l$   such that $u\equiv 1\pmod p$  or  $u\equiv 1\pmod 4$. This contradicts that   $u$  is  a solution of the above simultaneous congruences (\ref{e1})-(\ref{e4}). Therefore $u$ satisfies  condition (\ref{c2}). 
	
	Now by  quadratic reciprocity,  we have 
	\begin{align*}
		\left(\frac{q}{u}\right)&= (-1)^{\frac{u-1}{2}\frac{q-1}{2}}\left(\frac{u}{q}\right) =(-1)^{\frac{q-1}{2}}\left(\frac{p_1}{q}\right)=(-1)^{\frac{q-1}{2}}(-1)^{\frac{q-1}{2}}=1,\\	
		\left(\frac{k}{u}\right)&= (-1)^{\frac{k-1}{2}\frac{u-1}{2}}\left(\frac{u}{k}\right) =(-1)\left(\frac{p_2}{k}\right)=-1,\\
				\left(\frac{r}{u}\right)&= (-1)^{\frac{r-1}{2}\frac{u-1}{2}}\left(\frac{u}{r}\right) =\left(\frac{p_3}{r}\right)=-1.\end{align*}
	Hence   $u \in X_{K}\setminus X_{H(K)}$  and  also satisfies condition (\ref{c3}).

	Let  $u$ satisfy  conditions   (\ref{c1})-(\ref{c3}).  If $p \equiv u \pmod{4qkr}$, then every prime  $\fp \subset \sO_K$ which lies above $p$ does not
	split in $H(K)$ and   $ f(\fP/\fp)=2 $, where $\fP$ is  any prime of $H(K)$ which lies above $\fp$. Hence by \Cref{princi},
	$\fp$ is not a principal ideal, and  since  $h_K=2$,  $[\fp]$ is  the  generator of $Cl_K$. For each $i\in \{1,2,3\},$ 
		\[\left|  \begin{Bmatrix}
		&\Bigm\vert & N(\fP)\leq x, \\
		\fP\subset \sO_K &\Bigm\vert  & [\fP]=[C], \\
		\text{prime }	& \Bigm\vert&\pi_p \text{\ is \ onto}
	\end{Bmatrix}
	\right| >  \left|  \begin{Bmatrix}
		\text {prime } &\Bigm\vert& N(\fP)\equiv u \pmod{l}, \\
		\text { ideals of  }  &\Bigm\vert & N(\fP)\le x, \\
		\text{ first degree} &\Bigm\vert& \langle-1,e_i\rangle\twoheadrightarrow (\sO_K/\fP)^{\times}
	\end{Bmatrix}
	\right|. \]
Now using  	\Cref{G2},  we obtain
	\[\left|  \begin{Bmatrix}
	 \fP\subset \sO_K \ \text{prime ideal }&\Bigm\vert & N(\fP)\leq x,[\fP]=[C],\pi_p \text{\ is \ onto} \\
	\end{Bmatrix}
	\right|   \gg \frac{x}{(\log x)^2}.  \]
	
	Applying \Cref{G1}, we conclude that $K$ has a non-principal  Euclidean ideal, i.e., $K$ has a nontrivial Euclidean ideal class. 
	\section{Examples}\label{sec4}
	There are plenty of biquadratic fields $\Q(\sqrt{q},\sqrt{kr})$ with class
	number two. We present few examples of $\Q(\sqrt{q},\sqrt{kr})$ which
	satisfy  the hypothesis of \Cref{main}. All the computations are  done in  SageMath \nocite{sage}  and it can be found at \href{https://github.com/sunilpasupulati/existence-of-a-non-principal-Euclidean-ideal-class-in-biquadratic-fields} {https://github.com/sunilpasupulati}
\href{https://github.com/sunilpasupulati/existence-of-a-non-principal-Euclidean-ideal-class-in-biquadratic-fields} {/existence-of-a-non-principal-Euclidean-ideal-class-in-biquadratic-fields}.\\
	\begin{tabular}{ p{4cm} p{4cm}p{3cm}p{3cm}}
		\hline  \hline
		$(q,k,r)$ & $h_{\Q(\sqrt{q},\sqrt{kr})}$ & $(q,k,r)$ & $h_{\Q(\sqrt{q},\sqrt{kr})}$    \\
		\hline
		\hline
		(11,19,13)&2&(11,67,13) &2 \\
		(11,23,13)&2&(11,71,13) &2 \\
		(11,31,13)&2&(19,11,13) &2 \\
		(11,43,13)&2&(19,23,13) &2 \\
		(11,47,13)&2&(19,31,13) &2 \\
		(11,59,13)&2&(19,43,13) &2\\
		\hline
	\end{tabular}
	\begin{center}
		Table 1: Examples of $\Q(\sqrt{q},\sqrt{kr})$ with class number 2 and $q\equiv 3, k\equiv 3, r\equiv 1 \pmod{4}$.
	\end{center}
	\vspace{0.75cm}
	\begin{tabular}{ p{4cm} p{4cm}p{3cm}p{3cm}}
		\hline  \hline
		$(q,k,r)$ & $h_{\Q(\sqrt{q},\sqrt{kr})}$ & $(q,k,r)$ & $h_{\Q(\sqrt{q},\sqrt{kr})}$    \\
		\hline
		\hline
		(13,11,37) &2  &(37,11,29) &2 \\
		(13,11,41) &2 &(37,11,61) &2 \\
		(29,11,37) &2 &(37,11,109) &2 \\
		(29,11,41) &2& (41,11,13)&2\\
		(29,11,73) &2& (41,11,29)&2\\
		(37,11,13) &2& (41,11,53)&2 \\
		\hline
	\end{tabular}
	\begin{center}
		Table 2: Examples of $\Q(\sqrt{q},\sqrt{kr})$ with class number 2 and $q\equiv 1, k\equiv 3, r\equiv 1 \pmod{4}$. 
	\end{center}
	\section{Concluding Remarks}
	
	The assumption  $q,r,k \notin \{2,3,5,7,17\}$ in \Cref{main}  is not a necessary  condition. By using different method, we\cite{SP03}  proved the existence of euclidean ideal  $K =\Q(\sqrt{q},\sqrt{kr})$ without any conditions on $q,k$, and $r$  whenever $h_K=2$. 
	
	Lenstra \cite{len} also defined norm Euclidean ideals, which are the  Euclidean ideals  where the ideal norm is a Euclidean function.
		If K is a number field and C is  a fractional ideal of $\sO_K$, then C is norm Euclidean if for all $x\in K $,  there exists $y\in C$ such that 
			$Nm(x-y)<Nm(C)$. 
		
	Lezowski \cite{LP12} showed that the biquadratic field $\Q(\sqrt{2},\sqrt{35})$  does not contain a norm Euclidean ideal by  using  the Euclidean minimum. 
	It will be interesting to  know whether the biquadratic fields $K = \Q(\sqrt{q},\sqrt{kr})$ (under the assumptions of \Cref{main}) have norm Euclidean ideals.

	\section*{Acknowledgments}
	We thank  Viji Z Thomas for valuable discussions. We also thank Patali and  Shanmugapriya for their careful reading of this manuscript. We thank B.sury  for introducing the problem. We acknowledge IISER Tiruvananthapuram for providing excellent working conditions. We want to express profound gratitude to the anonymous referee for carefully analyzing the article and making suggestions for a better presentation. 
	
	\bibliographystyle{amsplain}
	\bibliography{ref.bib}

\end{document}